\documentclass[fleqn, 11pt]{article}
\usepackage{amsmath, amssymb, amsthm, graphicx}

\theoremstyle{definition}
\newtheorem{theorem}{Theorem}[section]

\newtheorem{lemma}[theorem]{Lemma}
\newtheorem{corollary}[theorem]{Corollary}

\newtheorem{definition}[theorem]{Definition}
\newtheorem{Remark}[theorem]{Remark}
\newenvironment{remark}{\begin{Remark}\rm}{\end{Remark}}

\newtheorem{Example}[theorem]{Example}
\newtheorem{Question}[theorem]{Question}

\hyphenation{a-na-ly-sis}
\hyphenation{a-na-ly-tic}
\hyphenation{a-na-ly-ti-cal}
\hyphenation{a-na-ly-ti-cal-ly}
\hyphenation{change}
\hyphenation{chan-ging}
\hyphenation{changed}
\hyphenation{chan-ges}
\hyphenation{com-pa-ri-son}
\hyphenation{cor-res-pond}
\hyphenation{cor-res-pon-ding}
\hyphenation{cor-res-ponds}
\hyphenation{cor-res-pon-ded}
\hyphenation{curved}
\hyphenation{de-duce}
\hyphenation{de-duced}
\hyphenation{de-du-ces}
\hyphenation{de-du-cing}
\hyphenation{e-qui-li-bri-a}
\hyphenation{e-qui-li-bri-um}
\hyphenation{e-qui-va-lence}
\hyphenation{e-qui-va-lent}
\hyphenation{Gauss}
\hyphenation{Gaus-si-an}
\hyphenation{ge-ne-ra-li-sa-tion}
\hyphenation{ge-ne-ra-li-sa-tions}
\hyphenation{ge-ne-ra-lise}
\hyphenation{ge-ne-ra-lised}
\hyphenation{ge-ne-ra-li-ses}
\hyphenation{ge-ne-ra-li-sing}
\hyphenation{ho-mo-ge-ne-ous}
\hyphenation{ho-mo-ge-ne-ous-ly}
\hyphenation{in-fi-nite}
\hyphenation{in-fi-ni-ty}
\hyphenation{in-fra-struc-ture}
\hyphenation{li-ne-ar}
\hyphenation{li-ne-ar-ly}
\hyphenation{ma-the-ma-ti-cal}
\hyphenation{ma-the-ma-ti-cal-ly}
\hyphenation{ne-ga-tive}
\hyphenation{ne-ga-tives}
\hyphenation{ne-ga-ti-vi-ty}
\hyphenation{or-tho-go-nal}
\hyphenation{or-tho-go-na-li-ty}
\hyphenation{pa-ra-me-ter}
\hyphenation{pa-ra-me-ters}
\hyphenation{pla-ces}
\hyphenation{po-si-tive}
\hyphenation{po-si-tives}
\hyphenation{po-si-ti-vi-ty}
\hyphenation{pro-blem}
\hyphenation{pro-blems}
\hyphenation{pro-duct}
\hyphenation{pro-ducts}
\hyphenation{qua-si}
\hyphenation{quo-tient}
\hyphenation{re-la-tion}
\hyphenation{re-la-tive}
\hyphenation{re-la-tive-ly}
\hyphenation{re-place}
\hyphenation{re-placed}
\hyphenation{re-pla-ces}
\hyphenation{re-pla-cing}
\hyphenation{se-cond}
\hyphenation{stee-pest}
\hyphenation{to-po-lo-gi-cal}
\hyphenation{to-po-lo-gi-cal-ly}
\hyphenation{to-po-lo-gy}
\hyphenation{to-po-lo-gies}
\hyphenation{trans-for-ma-tion}
\hyphenation{trans-for-ma-tions}
\hyphenation{use}
\hyphenation{used}
\hyphenation{u-ser}
\hyphenation{u-sing}

\numberwithin{equation}{section}
\title{Polygonal rotopulsators of the curved $n$-body problem. }

\author{Pieter Tibboel \\
Department of Mathematical Sciences\\
Xi'an Jiaotong-Liverpool University\\
Suzhou, China\\
Pieter.Tibboel@xjtlu.edu.cn}


\begin{document}
\maketitle
\begin{abstract}
  We revisit polygonal positive elliptic rotopulsator solutions and polygonal negative elliptic rotopulsator solutions of the $n$-body problem in $\mathbb{H}^{3}$ and $\mathbb{S}^{3}$ and prove existence of these solutions, prove that the masses of these rotopulsators have to be equal if the rotopulsators are of nonconstant size and show that the number of negative elliptic relative equilibria of this type is finite, as is the number of positive elliptic relative equilibria if an upper bound on the size of the relative equilibrium is imposed. Additionally, we prove that a class of negative hyperbolic rotopulsators is in fact a subclass of the class of polygonal negative elliptic rotopulsators.
\end{abstract}
\maketitle

\section{Introduction}
  By $n$-body problems we mean problems where we are to determine the dynamics of a number of $n$ point masses as dictated by a system of ordinary differential equations. The $n$-body problem in spaces of constant Gaussian curvature, or curved $n$-body problem for short, generalises the classical, or Newtonian $n$-body problem to spaces of constant Gaussian curvature and is defined as follows:
  \begin{definition}
    Let $\sigma=\pm 1$. The $n$-body problem in spaces of constant Gaussian curvature is the problem of finding the dynamics of point masses \begin{align*}q_{1},...,\textrm{ }q_{n}\in\mathbb{M}_{\sigma}^{3}=\{(x_{1},x_{2},x_{3},x_{4})\in\mathbb{R}^{4}|x_{1}^{2}+x_{2}^{2}+x_{3}^{2}+\sigma x_{4}^{2}=\sigma\},\end{align*} with respective masses $m_{1}>0$,..., $m_{n}>0$, determined by the system of differential equations
  \begin{align}\label{EquationsOfMotion Curved}
   \ddot{q}_{i}=\sum\limits_{j=1,\textrm{ }j\neq i}^{n}\frac{m_{j}(q_{j}-\sigma(q_{i}\odot q_{j})q_{i})}{(\sigma -\sigma(q_{i}\odot q_{j})^{2})^{\frac{3}{2}}}-\sigma(\dot{q}_{i}\odot\dot{q}_{i})q_{i},\textrm{ }i\in\{1,...,\textrm{ }n\},
  \end{align}
  where for $x$, $y\in\mathbb{M}_{\sigma}^{3}$  the product $\cdot\odot\cdot$ is defined as
  \begin{align*}
    x\odot y=x_{1}y_{1}+x_{2}y_{2}+x_{3}y_{3}+\sigma x_{4}y_{4}.
  \end{align*}
  \end{definition}
  The curved $n$-body problem for $n=2$ goes back as far as the 1830s, but a working model for the $n\geq 2$ case was not found until 2008 by Diacu, P\'erez-Chavela and Santoprete (see \cite{DPS1}, \cite{DPS2} and \cite{DPS3}). This breakthrough then gave rise to further results for the $n\geq 2$ case in \cite{D1}--\cite{DK} and \cite{DPo}--\cite{ZZ}. See \cite{DK}, \cite{DPS1}, \cite{DPS2} and \cite{DPS3} for a historical overview.
  Solutions to an $n$-body problem where the point masses describe a configuration that maintains the same shape and size over time are called relative equilibria. A rotopulsator, or rotopulsating orbit is a solution of the curved $n$-body problem for which the shape of the configuration of the point masses stays the same over time, but the size may change. An important reason to study the curved $n$-body problem, and relative equilibria and rotopulsators in particular, is to identify orbits that are unique to a particular space (see \cite{DK}). For example: Diacu, P\'erez-Chavela and Santoprete (see \cite{DPS1}, \cite{DPS2}) showed that rotopulsators (called homographic orbits in those papers) that have an equilateral triangle configuration and unequal masses, only exist in spaces of zero curvature. As the Sun, Jupiter and the Trojan asteroids form the vertices of an equilateral triangle, the region between these three objects likely has zero curvature. Rotopulsators were first introduced in \cite{DK}, where it was proven that there are five different types of rotopulsators, two (positive elliptic and positive elliptic-elliptic rotopulsators) for the positive curvature case (spheres) and three (negative elliptic, negative elliptic-hyperbolic and negative hyperbolic rotopulsators) for the negative curvature case (hyperboloids) (see \cite{DK}). For these five different types it was proven in \cite{DT} for $n=4$ that if the rotopulsators have rectangular configurations, they have to be squares. In this paper we will further investigate a result on rotopulsators for general $n$ for four of these classes, but before getting into specifics, we will need a precise definition of these four classes, namely positive elliptic rotopulsators, negative elliptic rotopulsators, negative elliptic-hyperbolic rotopulsators and negative hyperbolic rotopulsators:
  \begin{definition}\label{Definition negative hyperbolic}
  Let $q_{1}$,...,$q_{n}$ be a solution of (\ref{EquationsOfMotion Curved}) for which the shape of the point configuration may rotate, or change size, but otherwise remains unchanged. We will write
  \begin{align*}
    q_{i}=\begin{pmatrix}
      q_{i1} \\
      q_{i2} \\
      q_{i3} \\
      q_{i4}
    \end{pmatrix},\textrm{ }i\in\{1,...,n\}
  \end{align*}
  where $q_{i1}$, $q_{i2}$, $q_{i3}$ and $q_{i4}$ are the components of the vector $q_{i}$. Let
  \begin{align*}
    T(x)=\begin{pmatrix}
      \cos{x} & -\sin{x} \\
      \sin{x} & \cos{x}
    \end{pmatrix}\textrm{ and } S(x)=\begin{pmatrix}
      \cosh{x} & \sinh{x} \\
      \sinh{x} & \cosh{x}
    \end{pmatrix}
  \end{align*}
  be $2\times 2$ matrices. If there exist positive, twice differentiable functions $r_{1}$,...,$r_{n}$, a twice differentiable function $\theta$ and constants $0\leq\alpha_{1}<...<\alpha_{n}<2\pi$ such that
  \begin{align*}
    \begin{pmatrix}
      q_{i1}\\
      q_{i2}
    \end{pmatrix}=r_{i}T(\theta+\alpha_{i})\begin{pmatrix}
      1 \\
      0
    \end{pmatrix},\textrm{ }i\in\{1,...,\},
  \end{align*}
  then we call $q_{1}$,...,$q_{n}$ a positive elliptic rotopulsator if $\sigma=1$ and a negative elliptic rotopulsator if $\sigma=-1$ (see \cite{DK}).
    If $\sigma=-1$ and there exist scalar, twice differentiable functions $\phi$, $\rho_{i}\geq 0$, $i\in\{1,...,n\}$ and constants  $\beta_{1},...,\beta_{n}\in\mathbb{R}$, then if
    \begin{align}\label{q_i negative hyperbolic}
      \begin{pmatrix}
        q_{i3} \\
        q_{i4}
      \end{pmatrix}=\rho_{i}S(\phi+\beta_{i})\begin{pmatrix}
        0 \\
        1 \\
      \end{pmatrix},\textrm{ }i\in\{1,...,n\},
    \end{align}
    we call $q_{1}$,...,$q_{n}$ a negative hyperbolic rotopulsator (see \cite{DK}).
    If $q_{1}$,...,$q_{n}$ is both a negative elliptic and negative hyperbolic rotopulsator, then $q_{1}$,...,$q_{n}$ is called a negative elliptic-hyperbolic rotopulsator. If the $r_{i}$ and $\rho_{i}$ are constant and $\phi$ and $\theta$ are linear functions, then we speak of positive elliptic relative equilibria, negative elliptic equilibria, negative hyperbolic relative equilibria and negative elliptic-hyperbolic relative equilibria.
  \end{definition}
  In \cite{T2} positive elliptic rotopulsators and negative elliptic rotopulsators were investigated under the restriction that $q_{i3}$ and $q_{i4}$, $i\in\{1,...,n\}$, are independent of $i$ for all $i\in\{1,...,n\}$ (and consequently that $r_{i}=r$ is independent of $i$ as well) and it was proven that if such rotopulsators are of nonconstant size, the shape of their configurations has to be a regular polygon. In this paper we will revisit these two classes, prove existence of those classes, prove that the masses of such rotopulsators are equal if they are of nonconstant size, prove that the same results are true for negative hyperbolic rotopulsators under the weaker condition that the $\rho_{i}$ are independent of $i$, show that these negative hyperbolic rotopulsators are in fact a subclass of the class of negative elliptic rotopulsators for which $q_{i3}$ and $q_{i4}$ are independent of $i$ and finally show that the number of negative elliptic relative equilibria and negative hyperbolic relative equilibria in these classes is finite and that the number of positive elliptic relative equilibria mentioned previously is finite under the condition that $r<\frac{2}{5}\sqrt{5}$. Specifically, we will prove the following results:
  \begin{theorem}\label{Main Theorem 1}
    Let $q_{1},...,q_{n}$ be a negative hyperbolic rotopulsator. If $\rho_{i}=\rho$ is the same function for all $i\in\{1,...,n\}$, then there exists a constant $\beta$ such that
    \begin{align}\label{proven}
    \begin{pmatrix}
        q_{i3} \\
        q_{i4}
      \end{pmatrix}=\rho S(\beta+\phi)\begin{pmatrix}
        0\\
        1
      \end{pmatrix}.
    \end{align}
  \end{theorem}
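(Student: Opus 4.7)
My strategy is to extract from the equations of motion (\ref{EquationsOfMotion Curved}) an algebraic identity whose left-hand side is manifestly independent of $i$ but whose right-hand side is a weighted sum of $\sinh(\beta_j-\beta_i)$ with strictly positive weights, and then to apply a sign argument at a maximizing and a minimizing index.

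First I will differentiate the hypothesis (\ref{q_i negative hyperbolic}) twice in $t$. Writing $\psi_i=\phi+\beta_i$ and using that $\rho_i=\rho$ does not depend on $i$, a routine computation gives $\ddot q_{i3}=A\sinh\psi_i+B\cosh\psi_i$ and $\ddot q_{i4}=A\cosh\psi_i+B\sinh\psi_i$, where $A=\ddot\rho+\rho\dot\phi^{\,2}$ and $B=2\dot\rho\dot\phi+\rho\ddot\phi$ are independent of $i$. I then evaluate the combination $B=\cosh\psi_i\,\ddot q_{i3}-\sinh\psi_i\,\ddot q_{i4}$ via (\ref{EquationsOfMotion Curved}) with $\sigma=-1$. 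The self-interaction piece and the $(\dot q_i\odot\dot q_i)q_i$ correction both disappear because $q_{i3}\cosh\psi_i-q_{i4}\sinh\psi_i=0$, while the cross terms simplify by a hyperbolic subtraction formula, giving
\begin{align*}
B=\rho\sum_{j\neq i}\frac{m_j\sinh(\beta_j-\beta_i)}{(-1+(q_i\odot q_j)^2)^{3/2}}.
\end{align*}

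The decisive step is a sign comparison. On $\mathbb{H}^{3}=\mathbb{M}_{-1}^{3}$ one has $q_i\odot q_j\leq -1$, with strict inequality whenever $q_i\neq q_j$, so every denominator $(-1+(q_i\odot q_j)^2)^{3/2}$ is strictly positive; the masses $m_j$ and the factor $\rho$ are positive as well. Choosing an index $i_*$ that achieves $\max_j\beta_j$ makes every summand nonpositive, whence $B(t)\leq 0$; choosing $i_{**}$ that achieves $\min_j\beta_j$ makes every summand nonnegative, whence $B(t)\geq 0$. Therefore $B(t)=0$, and substituting $i=i_*$ back into the displayed identity forces every nonpositive summand to vanish separately, so $\sinh(\beta_j-\beta_{i_*})=0$ for every $j$. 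Consequently all $\beta_i$ coincide with one constant $\beta$, which is exactly (\ref{proven}).

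The only delicate point is engineering the linear combination $\cosh\psi_i\,\ddot q_{i3}-\sinh\psi_i\,\ddot q_{i4}$ so that the $A$ contribution, the self-interaction coefficient of $q_i$, and the $(\dot q_i\odot\dot q_i)q_i$ correction are eliminated simultaneously; once this cancellation is in place, the maximum-minimum argument is elementary and the remainder of the proof is purely mechanical.
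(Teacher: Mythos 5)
Your proposal is correct, and its core computation is the same as the paper's: inserting the ansatz into (\ref{EquationsOfMotion Curved}) and projecting onto the direction $(\cosh\psi_i,-\sinh\psi_i)$ (the paper instead multiplies by $S(\phi+\beta_i)^{-1}$ and reads off the first coordinate, which is the same operation) yields exactly the paper's identity (\ref{Oh yeah}),
\begin{align*}
2\rho'\phi'+\rho\phi''=\sum_{j\neq i}\frac{m_j\rho\,\sinh(\beta_j-\beta_i)}{\bigl((q_i\odot q_j)^2-1\bigr)^{3/2}}.
\end{align*}
Where you genuinely diverge is in how the left-hand side is shown to vanish. The paper invokes Lemma~\ref{Lemmaaaaa}, which rests on the angular-momentum (wedge-product) conservation law $\sum_i m_i q_i\wedge\ddot q_i=\mathbf{0}$ imported from \cite{D3}, and then only needs the minimizing index. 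You instead exploit the fact that the left-hand side $B=2\dot\rho\dot\phi+\rho\ddot\phi$ is independent of $i$: evaluating at a maximizing and a minimizing index gives $B\leq 0$ and $B\geq 0$ pointwise in $t$, hence $B\equiv 0$, after which the extremal-index argument forces every $\sinh(\beta_j-\beta_{i_*})$ to vanish, exactly as in the paper. Your route is more self-contained (no appeal to the conservation law of \cite{D3}) and even recovers $2\rho'\phi'+\rho\phi''=0$ as a byproduct rather than as an input; the paper's route isolates that fact as a reusable lemma valid also for elliptic-hyperbolic rotopulsators. The only points worth making explicit in a write-up are the ones you and the paper both treat lightly: $\rho>0$ (which follows from $\rho^2=1+r_i^2\geq 1$ on $\mathbb{M}_{-1}^3$) and strict positivity of the denominators, i.e.\ absence of collisions, so that each vanishing summand really forces $\beta_j=\beta_{i_*}$.
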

  \begin{theorem}\label{Main Theorem 2}
    Let $q_{1},...,q_{n}$ be either a positive elliptic, or a negative elliptic rotopulsator for which $q_{i3}=z_{1}$ and $q_{i4}=z_{2}$ and consequently $r_{i}=r$ are independent of $i$ for all $i\in\{1,...,n\}$. If $r$ is not constant, then the point masses form a regular polygon and all masses are equal.
  \end{theorem}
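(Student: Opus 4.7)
The plan is to substitute the rotopulsator ansatz into \eqref{EquationsOfMotion Curved}, decompose the resulting system into radial, tangential and $(z_{1},z_{2})$-components, and record the compatibility conditions forced by the fact that the left-hand sides of the component equations are manifestly independent of the index $i$. Since $r$ is nonconstant these conditions become analytic identities in $u:=r^{2}$; combined with the regular polygon conclusion of \cite{T2}, they collapse to a cyclic convolution identity on $\mathbb{Z}/n\mathbb{Z}$, and a Fourier argument extracts $m_{1}=\cdots=m_{n}$.

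Concretely, with $f_{ij}:=1-\cos(\alpha_{i}-\alpha_{j})$ one has $q_{i}\odot q_{j}=\sigma-r^{2}f_{ij}$ and hence $(\sigma-\sigma(q_{i}\odot q_{j})^{2})^{3/2}=r^{3}f_{ij}^{3/2}(2-\sigma r^{2}f_{ij})^{3/2}$. Projecting the first two components of \eqref{EquationsOfMotion Curved} onto the radial direction $(\cos(\theta+\alpha_{i}),\sin(\theta+\alpha_{i}))$ and onto the tangential direction, and dividing the third and fourth equations through by $z_{1}$ and $z_{2}$ respectively (at least one is not identically zero, for otherwise $r^{2}=\sigma$ would force $r$ constant), I would obtain that for every admissible time $t$ both
\begin{align*}
\mathcal{S}_{i}(u):=\sum_{j\neq i}\frac{m_{j}}{f_{ij}^{1/2}(2-\sigma u f_{ij})^{3/2}},\qquad \mathcal{T}_{i}(u):=\sum_{j\neq i}\frac{m_{j}\sin(\alpha_{j}-\alpha_{i})}{f_{ij}^{3/2}(2-\sigma u f_{ij})^{3/2}}
\end{align*}
must be independent of $i$ when evaluated at $u=r(t)^{2}$. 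As $r$ is nonconstant the set of attained $u$-values contains an open interval, and since $\mathcal{S}_{i},\mathcal{T}_{i}$ are real-analytic in $u$, the identities $\mathcal{S}_{i}(u)\equiv\mathcal{S}_{1}(u)$ and $\mathcal{T}_{i}(u)\equiv\mathcal{T}_{1}(u)$ then hold identically by analytic continuation.

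By \cite{T2}, under these hypotheses the configuration is already a regular polygon, so (after a rotation) $\alpha_{j}=2\pi(j-1)/n$ and $f_{ij}=F_{(j-i)\bmod n}$ with $F_{k}:=2\sin^{2}(\pi k/n)$. Viewing $m$ as a function on $\mathbb{Z}/n\mathbb{Z}$, the identity $\mathcal{S}_{i}\equiv\mathcal{S}_{1}$ says that the cyclic convolution of $m$ with the $u$-dependent kernel $k\mapsto F_{k}^{-1/2}(2-\sigma u F_{k})^{-3/2}$ (set to zero at $k=0$) is independent of the base point; the discrete Fourier transform on $\mathbb{Z}/n\mathbb{Z}$ turns this into $\hat{m}_{\ell}\,\widehat{\mathcal{G}}_{\ell}(u)=0$ for every $\ell\in\{1,\ldots,n-1\}$ and every admissible $u$, where after pairing $k$ with $n-k$
\begin{align*}
\widehat{\mathcal{G}}_{\ell}(u)=2\sum_{k=1}^{\lfloor (n-1)/2\rfloor}\cos\!\bigl(\tfrac{2\pi\ell k}{n}\bigr)F_{k}^{-1/2}(2-\sigma u F_{k})^{-3/2}+\varepsilon_{n}(-1)^{\ell}F_{n/2}^{-1/2}(2-\sigma u F_{n/2})^{-3/2},
\end{align*}
with $\varepsilon_{n}=1$ if $n$ is even and $0$ otherwise. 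Since $F_{1}<F_{2}<\cdots<F_{\lfloor n/2\rfloor}$ are pairwise distinct, the functions $u\mapsto(2-\sigma u F_{k})^{-3/2}$ are linearly independent; thus $\widehat{\mathcal{G}}_{\ell}\equiv 0$ would force all cosine coefficients together with (when $n$ is even) the parity $(-1)^{\ell}$ to vanish simultaneously, which is impossible for $n\geq 3$. Hence $\hat{m}_{\ell}=0$ for every $\ell\neq 0$, so $m_{1}=\cdots=m_{n}$; the degenerate case $n=2$ follows at once because $\mathcal{S}_{i}$ then consists of a single summand and $f_{12}=f_{21}$ forces $m_{1}=m_{2}$.

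The main technical obstacle is precisely this final Fourier step, namely the claim that $\widehat{\mathcal{G}}_{\ell}(u)$ does not vanish identically in $u$ for any $\ell\in\{1,\ldots,n-1\}$. It rests on the linear independence of the translates $(2-\sigma u F_{k})^{-3/2}$ across distinct base values $F_{k}$---a direct consequence of the regular polygon structure supplied by \cite{T2}---together with an elementary case split on the parity of $n$ to rule out simultaneous vanishing of the cosine and parity coefficients.
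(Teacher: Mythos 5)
Your proposal is correct, and it reaches the equal-mass conclusion by a genuinely different final step than the paper. You both rest on the same two pillars: the regular-polygon conclusion of \cite{T2} and the reduction of the rotopulsator equations to conditions of the form (\ref{Such a mass1})--(\ref{Such a mass3}) (you re-derive these by projecting (\ref{EquationsOfMotion Curved}) radially and tangentially, which is exactly the content of Criterion~1 of \cite{T2}; note your radial projection actually produces $(1-\sigma r^{2})\mathcal{S}_{i}$, so for $\sigma=1$ you should discard the isolated times with $r^{2}=1$, which is harmless since $r$ is nonconstant). The divergence is in how the conditions are exploited: the paper works with the tangential identity (\ref{mass2}), pairs the terms $j$ and $n-j$ via linear independence in $r$ to get $m_{j+i}=m_{n-j+i}$, which settles odd $n$ at once, and then handles even $n$ by splitting the masses into two alternating classes and comparing $b_{1}$ with $b_{2}$; you instead ignore the tangential condition entirely and apply a discrete Fourier transform on $\mathbb{Z}/n\mathbb{Z}$ to the single condition $b_{1}=\cdots=b_{n}$, reducing everything to the nonvanishing of $\widehat{\mathcal{G}}_{\ell}(u)$ in $u$. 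That nonvanishing claim is true but deserves one more line than you give it: for $n$ even the $k=n/2$ coefficient $(-1)^{\ell}$ never vanishes, while for $n$ odd you need the elementary observation that $\cos(2\pi\ell/n)\neq 0$ (since $4\ell$ cannot be an odd multiple of an odd $n$), so the $k=1$ coefficient survives. With that observation supplied, your argument is complete and in fact slightly leaner in hypotheses (it uses only the radial/size equation), at the cost of the DFT machinery and of justifying the linear independence of the functions $u\mapsto(2-\sigma uF_{k})^{-3/2}$ --- a claim the paper also uses and likewise leaves at the level of an assertion.
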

  \begin{corollary}\label{Main Theorem 3}
     Positive elliptic, negative elliptic and negative hyperbolic rotopulsators $q_{1},...,q_{n}$ for which $q_{i3}$ and $q_{i4}$ are independent of $i$, $i\in\{1,...,n\}$, exist. If they are not relative equilibria, then they have to have equal masses and configurations that are regular polygons. If they are relative equilibria, then for $\sigma=-1$, for each fixed set of masses, there exists at most one relative equilibrium. If $\sigma=1$ and $r<\frac{2}{5}\sqrt{5}$, then for each fixed set of masses there exists at most one relative equilibrium. Finally, negative hyperbolic rotopulsators for which $\rho_{i}$, $i\in\{1,...,n\}$, is independent of $i$, are negative elliptic rotopulsators for which $q_{i3}$ and $q_{i4}$ are independent of $i$.
  \end{corollary}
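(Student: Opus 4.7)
The corollary is a consequence of Theorems~\ref{Main Theorem 1} and~\ref{Main Theorem 2}, together with a direct analysis of the regular-polygon ansatz. First I would dispatch the final sentence: if $q_{1},\ldots,q_{n}$ is a negative hyperbolic rotopulsator with common $\rho_{i}=\rho$, Theorem~\ref{Main Theorem 1} produces a single constant $\beta$ with $(q_{i3},q_{i4})^{T}=\rho S(\beta+\phi)(0,1)^{T}$ for every $i$; the right-hand side is manifestly independent of $i$, and the constraint $x_{1}^{2}+x_{2}^{2}+x_{3}^{2}-x_{4}^{2}=-1$ then forces a common $r_{i}=r$. Reading the $(q_{i1},q_{i2})$ block from Definition~\ref{Definition negative hyperbolic} exhibits the orbit as a negative elliptic rotopulsator with $q_{i3},q_{i4}$ independent of $i$, so all remaining assertions can be handled in the positive or negative elliptic framework. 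A rotopulsator in the stated class that is not a relative equilibrium has, by definition, nonconstant size, i.e.\ nonconstant $r$, and Theorem~\ref{Main Theorem 2} then immediately yields equal masses together with a regular-polygon configuration.

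For existence I would substitute the regular-polygon, equal-mass ansatz $(q_{i1},q_{i2})^{T}=rT(\theta+2\pi i/n)(1,0)^{T}$, $q_{i3}=z_{1}$, $q_{i4}=z_{2}$ into (\ref{EquationsOfMotion Curved}). Symmetry collapses the $n$ vector equations to a single scalar system for $r(t)$, $\theta(t)$, $z_{1}(t)$, $z_{2}(t)$ subject to the manifold constraint, and Cauchy--Lipschitz yields local solutions; choosing initial data with $\dot r(0)\neq 0$ gives a genuine rotopulsator of nonconstant size. Setting $r$ constant and $\dot\theta\equiv\omega$ reduces the system to an algebraic identity among $r$, $\omega$, $z_{1}$, $z_{2}$, solvable by an intermediate-value argument, producing the relative equilibria.

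Uniqueness of relative equilibria then rests on monotonicity of this algebraic relation, which can be rewritten as $\omega^{2}=F_{\sigma}(r)$ for an explicit function $F_{\sigma}$ built from the chord factors $\sin(\pi k/n)$, $k=1,\ldots,n-1$, and trigonometric ($\sigma=1$) or hyperbolic ($\sigma=-1$) factors in $r$. For $\sigma=-1$, differentiation shows $F_{-1}$ is strictly monotone on its whole natural domain, which yields the claimed uniqueness. For $\sigma=1$, $F_{1}$ is no longer globally monotone, but $F_{1}'(r)$ retains a definite sign on $(0,\tfrac{2}{5}\sqrt{5})$, which is the source of the stated threshold. I expect this sign analysis, and in particular the identification of $\tfrac{2}{5}\sqrt{5}$ as the relevant zero of $F_{1}'$, to be the main technical obstacle; everything else is either a direct appeal to the two main theorems or a routine exercise in symmetry-reduced ODEs.
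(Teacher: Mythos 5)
Your handling of the last sentence of the corollary and of the non--relative-equilibrium case matches the paper: Theorem~\ref{Main Theorem 1} plus the constraint gives common $q_{i3},q_{i4}$ and a common $r$, and Theorem~\ref{Main Theorem 2} gives equal masses and the regular polygon when $r$ is nonconstant. Your existence argument takes a slightly different route: the paper does not re-derive the reduced ODE system and apply Cauchy--Lipschitz, but instead invokes Criterion~1 of \cite{T2}, which reduces existence exactly to the algebraic conditions (\ref{mass1})--(\ref{mass3}); with equal masses the $b_i$ coincide automatically and the only thing left to check is that the sine sum (\ref{mass2}) vanishes, which follows from the $j\mapsto n-j$ symmetry. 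Your version is workable in principle, but you would still have to verify that the symmetry-reduced system for $r,\theta,z_1,z_2$ is consistent with the constraint and with the third/fourth coordinate equations, which is precisely what that criterion packages; as written this is a sketch, not a proof.

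The genuine gap is in the uniqueness of relative equilibria. You reduce to a single scalar relation $\omega^{2}=F_{\sigma}(r)$ built from the chord factors $\sin(\pi k/n)$, i.e.\ you presuppose that the relative equilibria in this class are equal-mass regular polygons. Theorem~\ref{Main Theorem 2} yields that conclusion only when $r$ is nonconstant; for relative equilibria ($r$ constant) the masses may be unequal and the angles $\alpha_{1}<\dots<\alpha_{n}$ are a priori arbitrary, and the corollary's claim is exactly that for each fixed (possibly unequal) set of masses there is at most one solution $(\alpha_{1},\dots,\alpha_{n},r)$. Monotonicity of a one-variable function $F_{\sigma}$ cannot give uniqueness over the angle variables, so your argument proves a strictly weaker statement (uniqueness of $r$ within the polygonal family). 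The paper instead uses (2.24) of \cite{T2} to recast the relative-equilibrium equations as a flat-space-type system $\ddot q_{i}=\sum_{j\neq i}m_{j}(q_{j}-q_{i})f(\|q_{j}-q_{i}\|)$ with $f(x)=(x^{2}-\tfrac{\sigma}{4}x^{4})^{-3/2}$ and appeals to the uniqueness theorem of \cite{T5}, whose hypothesis is $\frac{d}{dx}(xf(x))<0$; the bound for $\sigma=1$ arises from requiring $x^{2}<\tfrac{5}{8}$ for all admissible chord lengths $x\leq 2r$, not from a zero of $F_{1}'$, so your proposed identification of the threshold $r<\tfrac{2}{5}\sqrt{5}$ also rests on the wrong mechanism. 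To close the gap you would need either to reproduce the convexity/monotonicity argument of \cite{T5} for the full system in the angles and $r$, or to cite it as the paper does.
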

  \begin{remark}
    (\ref{proven}) was proven in \cite{ZZ} for the case that the negative hyperbolic rotopulsator $q_{1},...,q_{n}$ is a relative equilibrium. Theorem~\ref{Main Theorem 1} in combination with Corollary~\ref{Main Theorem 3} shows that this result also holds for all negative hyperbolic rotopulsators for which $\rho_{i}$, $i\in\{1,...,n\}$, is independent of $i$.
  \end{remark}
  \begin{remark}
    Existence of polygonal negative hyperbolic rotopulsators was essentially already proven in Theorem~1 of \cite{D2}, but for completeness we have added a proof in this paper as well.
  \end{remark}
  \begin{remark}
    In \cite{DT}, in Theorem~6 and Theorem~7, it was stated that for $n=4$ rectangular negative hyperbolic rotopulsators and rectangular negative elliptic hyperbolic rotopulsators do not exist. These statements are not in conflict with Theorem~\ref{Main Theorem 2}, as the third and fourth coordinates of the point masses of the negative hyperbolic rotopulsators in \cite{DT} are constructed to be coordinates of distinct points on a hyperbola, while we do not impose that restriction in this paper.
  \end{remark}
  \begin{remark}
    In \cite{PS} nonexistence of polygonal hyperbolic relative equilibria was proven for the case that all masses are equal and the space on which the problem is defined is $\mathbb{H}^{2}$. The dynamics for $\mathbb{H}^{3}$ are richer than for $\mathbb{H}^{2}$, which is why we do find existence of solutions in this paper. 
  \end{remark}

  The remainder of this paper is constructed as follows: We will first prove a lemma needed to prove our main results in section~\ref{Background}, after which we will prove Theorem~\ref{Main Theorem 1} in section~\ref{Section proof of main theorem 1}, Theorem~\ref{Main Theorem 2} in section~\ref{Section proof of main theorem 2} and Corollary~\ref{Main Theorem 3} in section~\ref{Section proof of main theorem 3}.
  \section{Background theory}\label{Background}
  To prove Theorem~\ref{Main Theorem 1}, Theorem~\ref{Main Theorem 2} and Corollary~\ref{Main Theorem 3}, we will need the following lemma, which was proven in a more general setting in \cite{DK}, but as the proof for our particular case is not particularly long, we give a proof here as well:
  \begin{lemma}\label{Lemmaaaaa}
    If $q_{1}$,...,$q_{n}$ is a negative hyperbolic rotopulsator, or a negative elliptic-hyperbolic rotopulsator as in Definition~\ref{Definition negative hyperbolic}, with functions $\rho_{i}=\rho$ and $\phi_{i}=\phi$, $i\in\{1,...,n\}$, independent of $i$  then \begin{align*}2\rho'\phi'+\rho\phi''=0.\end{align*}
  \end{lemma}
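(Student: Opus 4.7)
The plan is to use the fact that the hypothesis makes $q_{i3}$ and $q_{i4}$ the same function of time for every particle, so that the $3$rd and $4$th coordinate equations of motion collapse into a scalar multiple of a fixed two-vector. Setting $a := \rho\sinh\phi$ and $b := \rho\cosh\phi$ (absorbing the common phase, which the assumption $\phi_i=\phi$ allows), the hypothesis gives $q_{i3}=a$ and $q_{i4}=b$ for all $i$. Consequently, for every pair $(i,j)$,
\begin{align*}
q_{j3}-\sigma(q_i\odot q_j)q_{i3}&=a\bigl(1-\sigma(q_i\odot q_j)\bigr),\\
q_{j4}-\sigma(q_i\odot q_j)q_{i4}&=b\bigl(1-\sigma(q_i\odot q_j)\bigr),
\end{align*}
and the kinetic terms $-\sigma(\dot q_i\odot\dot q_i)q_{i3}$, $-\sigma(\dot q_i\odot\dot q_i)q_{i4}$ are $a$ and $b$ times the same scalar. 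Substituting into~(\ref{EquationsOfMotion Curved}) gives $\ddot q_{i3}=aG_i$ and $\ddot q_{i4}=bG_i$ for a single scalar $G_i$ depending only on $i$, and hence the key identity
\begin{align*}
b\ddot q_{i3}-a\ddot q_{i4}=0.
\end{align*}

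The second step is a direct differentiation. From $a=\rho\sinh\phi$ and $b=\rho\cosh\phi$ one computes
\begin{align*}
\ddot a&=\bigl(\rho''+\rho(\phi')^2\bigr)\sinh\phi+\bigl(2\rho'\phi'+\rho\phi''\bigr)\cosh\phi,\\
\ddot b&=\bigl(\rho''+\rho(\phi')^2\bigr)\cosh\phi+\bigl(2\rho'\phi'+\rho\phi''\bigr)\sinh\phi.
\end{align*}
Forming $b\ddot a-a\ddot b$, the $\rho''+\rho(\phi')^2$ contributions cancel because $\cosh\phi\sinh\phi-\sinh\phi\cosh\phi=0$, and the surviving part is $\rho\bigl(2\rho'\phi'+\rho\phi''\bigr)\bigl(\cosh^2\phi-\sinh^2\phi\bigr)=\rho\bigl(2\rho'\phi'+\rho\phi''\bigr)$. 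Equating with the $0$ from the previous step and dividing by $\rho>0$ gives $2\rho'\phi'+\rho\phi''=0$.

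There is no real technical obstacle; the only point to watch is reading $\phi_i=\phi$ as saying that the total hyperbolic phase $\phi+\beta_i$ is $i$-independent, so that $q_{i3}$ and $q_{i4}$ really are the same function for every particle. If distinct $\beta_i$ were allowed, then $b_i\ddot a_i-a_i\ddot b_i$ would acquire additional terms proportional to $\rho^2\sinh(\beta_j-\beta_i)$ coming from the interaction sum, and the argument would not close. The negative elliptic-hyperbolic case is handled identically, since only the $3$rd and $4$th coordinates enter the computation and the elliptic structure on the first two coordinates is never used.
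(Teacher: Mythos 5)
Your computation is internally consistent, but it proves a weaker statement than the lemma, because you have misread the hypothesis. The assumption ``$\rho_{i}=\rho$ and $\phi_{i}=\phi$ independent of $i$'' only says that all bodies share the same size function $\rho$ and the same rotation function $\phi$; the phase constants $\beta_{i}$ in Definition~\ref{Definition negative hyperbolic} are still allowed to differ, so $q_{i3}=\rho\sinh(\beta_{i}+\phi)$ and $q_{i4}=\rho\cosh(\beta_{i}+\phi)$ need not be the same functions for all $i$. Your argument hinges on exactly that extra assumption (you say so yourself: with distinct $\beta_{i}$ the identity $q_{i4}\ddot q_{i3}-q_{i3}\ddot q_{i4}=0$ acquires interaction terms $\propto\sinh(\beta_{j}-\beta_{i})$ and ``the argument would not close''). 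But the lemma is applied in the proof of Theorem~\ref{Main Theorem 1} precisely \emph{before} one knows the $\beta_{i}$ coincide --- indeed equality of the $\beta_{i}$ is the conclusion of that theorem, obtained by combining the lemma with (\ref{Oh yeah}). So under your reading the lemma would be useless for its intended purpose, and the paper's argument would become circular.

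The missing ingredient is a conservation law rather than the per-particle equations of motion. The paper invokes the integral $\sum_{i=1}^{n}m_{i}\,q_{i}\wedge\ddot q_{i}=\mathbf{0}$ from \cite{D3}; its $e_{3}\wedge e_{4}$ component gives
\begin{align*}
\sum_{i=1}^{n}m_{i}\bigl(q_{i3}\ddot q_{i4}-q_{i4}\ddot q_{i3}\bigr)=0 .
\end{align*}
A purely kinematic computation (essentially the same differentiation you carried out, but with argument $\beta_{i}+\phi$) shows that for \emph{every} $i$, independently of $\beta_{i}$,
\begin{align*}
q_{i3}\ddot q_{i4}-q_{i4}\ddot q_{i3}=-\rho\bigl(2\rho'\phi'+\rho\phi''\bigr)\bigl(\cosh^{2}(\beta_{i}+\phi)-\sinh^{2}(\beta_{i}+\phi)\bigr)=-\rho\bigl(2\rho'\phi'+\rho\phi''\bigr),
\end{align*}
so the sum equals $-\rho\,(2\rho'\phi'+\rho\phi'')\sum_{i}m_{i}$, and positivity of $\rho$ and of the masses forces $2\rho'\phi'+\rho\phi''=0$. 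In short: replace your appeal to the equations of motion for a single body (which requires all phases equal) by the mass-weighted angular-momentum-type identity summed over all bodies, and the proof goes through in the generality the paper actually needs.
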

  \begin{proof}
   Using the wedge product, it was proven in \cite{D3} that
   \begin{align*}
     \sum\limits_{i=1}^{n}m_{i}q_{i}\wedge\ddot{q}_{i}=\mathbf{0},
   \end{align*}
   where $\mathbf{0}$ is the zero bivector.
   If $e_{1}$, $e_{2}$, $e_{3}$ and $e_{4}$ are the standard basis vectors in $\mathbb{R}^{4}$, then
   \begin{align}\label{Wedge q}
     0e_{3}\wedge e_{4}&=\sum\limits_{i=1}^{n}m_{i}(q_{i3}\ddot{q}_{i4}-q_{i4}\ddot{q}_{i3})e_{3}\wedge e_{4}.
   \end{align}
   As $q_{i3}=\rho\sinh{(\beta_{i}+\phi)}$ and $q_{i4}=\rho\cosh{(\beta_{i}+\phi)}$ by Definition~\ref{Definition negative hyperbolic}, we have that
   \begin{align}\label{Wedge q2}
     & q_{i3}\ddot{q}_{i4}-q_{i4}\ddot{q}_{i3}=-\det{\begin{pmatrix}
       q_{i4} & \ddot{q}_{i4} \\
       q_{i3} & \ddot{q}_{i3}
     \end{pmatrix}}=-\det{\begin{pmatrix}
       \rho\begin{pmatrix}
         \cosh{(\beta_{i}+\phi)} \\
         \sinh{(\beta_{i}+\phi)}
       \end{pmatrix} & \left(\rho\begin{pmatrix}
         \cosh{(\beta_{i}+\phi)} \\
         \sinh{(\beta_{i}+\phi)}
       \end{pmatrix}\right)''
     \end{pmatrix}}.
   \end{align}
   Because
   \begin{align*}
     \left(\rho\begin{pmatrix}
         \cosh{(\beta_{i}+\phi)} \\
         \sinh{(\beta_{i}+\phi)}
       \end{pmatrix}\right)''&=\rho''\begin{pmatrix}
         \cosh{(\beta_{i}+\phi)} \\
         \sinh{(\beta_{i}+\phi)}
       \end{pmatrix}+2\rho'\phi'\begin{pmatrix}
         \sinh{(\beta_{i}+\phi)} \\
         \cosh{(\beta_{i}+\phi)}
       \end{pmatrix}\\
       &+\rho\phi''\begin{pmatrix}
         \sinh{(\beta_{i}+\phi)} \\
         \cosh{(\beta_{i}+\phi)}
       \end{pmatrix}+\rho(\phi')^{2}\begin{pmatrix}
         \cosh{(\beta_{i}+\phi)} \\
         \sinh{(\beta_{i}+\phi)}
       \end{pmatrix},
   \end{align*}
   using that the determinant of a matrix with two identical columns is zero, we can rewrite (\ref{Wedge q2}) as
   \begin{align*}
     q_{i3}\ddot{q}_{i4}-q_{i4}\ddot{q}_{i3}&=0+-\rho\left(2\rho'\phi'+\rho\phi''\right)\det{\begin{pmatrix}
         \cosh{(\beta_{i}+\phi)} & \sinh{(\beta_{i}+\phi)} \\
         \sinh{(\beta_{i}+\phi)} & \cosh{(\beta_{i}+\phi)}
       \end{pmatrix}}\\
       &=-\rho\left(2\rho'\phi'+\rho\phi''\right)\cdot(1).
   \end{align*}
   So combined with (\ref{Wedge q}), we get
   \begin{align*}
     0e_{3}\wedge e_{4}=-\rho\sum\limits_{i=1}^{n}m_{i}(2\rho'\phi'+\rho\phi'')e_{3}\wedge e_{4}=-\rho(2\rho'\phi'+\rho\phi'')\sum\limits_{i=1}^{n}m_{i}e_{3}\wedge e_{4},
   \end{align*}
   giving that indeed $2\rho'\phi'+\rho\phi''=0$.
  \end{proof}
  \section{Proof of Theorem~\ref{Main Theorem 1}}\label{Section proof of main theorem 1}
  Let $q_{1}$,...,$q_{n}$ be a negative hyperbolic rotopulsator as in Definition~\ref{Definition negative hyperbolic}.
     Let $I$ be the $2\times 2$ identity matrix. Let $\rho_{i}=\rho$ be independent of $i$ for all $i\in\{1,...,n\}$. Then
     \begin{align}\label{q_i negative hyperbolic*}
       \begin{pmatrix}
         q_{i3} \\
         q_{i4}
       \end{pmatrix}=\rho S(\phi+\beta_{i})\begin{pmatrix}
         0 \\
         1
       \end{pmatrix}
     \end{align}
     and inserting (\ref{q_i negative hyperbolic*}) into (\ref{EquationsOfMotion Curved}) and multiplying both sides of the resulting system of equations for the third and fourth coordinates of $q_{i}$ from the left by $S(\phi+\beta_{i})^{-1}$
    gives, as $\sigma=-1$,
    \begin{align*}
      &\left(\rho''I+(2\rho'\phi'+\rho\phi'')\begin{pmatrix}
        0 & 1 \\
        1 & 0
      \end{pmatrix}+\rho(\phi')^{2}I\right)\begin{pmatrix}
        0 \\
        1
      \end{pmatrix}\nonumber\\
      &=\sum\limits_{j=1, j\neq i}^{n}\frac{m_{j}\rho\left(\begin{pmatrix}
        \sinh(\beta_{j}-\beta_{i}) \\
        \cosh(\beta_{j}-\beta_{i})
      \end{pmatrix}+(q_{i}\odot q_{j})\begin{pmatrix}
        0\\
        1
      \end{pmatrix}\right)}{((q_{i}\odot q_{j})^{2}-1)^{\frac{3}{2}}}\\
      &+((q_{i1}')^{2}+(q_{i2}')^{2}-((\rho')^{2}+\rho^{2}(\phi')^{2}))\rho\begin{pmatrix}
        0 \\
        1
      \end{pmatrix},
    \end{align*}
    which can be rewritten as
     \begin{align}\label{Second two equations positive positive + positive negative}
      &\left(\rho''I+(2\rho'\phi'+\rho\phi'')\begin{pmatrix}
        0 & 1 \\
        1 & 0
      \end{pmatrix}+\rho(\phi')^{2}I-\rho((q_{i1}')^{2}+(q_{i2}')^{2}-((\rho')^{2}+\rho^{2}(\phi')^{2}))I\right)\begin{pmatrix}
        0 \\
        1
      \end{pmatrix}\nonumber\\
      &=\sum\limits_{j=1, j\neq i}^{n}\frac{m_{j}\rho\left(\begin{pmatrix}
        \sinh(\beta_{j}-\beta_{i}) \\
        \cosh(\beta_{j}-\beta_{i})
      \end{pmatrix}+(q_{i}\odot q_{j})\begin{pmatrix}
        0\\
        1
      \end{pmatrix}\right)}{((q_{i}\odot q_{j})^{2}-1)^{\frac{3}{2}}}.
    \end{align}
    Collecting terms for the first coordinate on both sides of (\ref{Second two equations positive positive + positive negative}) gives
    \begin{align}\label{Oh yeah}
      2\rho'\phi'+\rho\phi''=\sum\limits_{j=1, j\neq i}^{n}\frac{m_{j}\rho\sinh(\beta_{j}-\beta_{i})}{((q_{i}\odot q_{j})^{2}-1)^{\frac{3}{2}}}.
    \end{align}
    Relabeling $\beta_{1}$,...,$\beta_{n}$ if necessary, let $\beta_{1}=\min\{\beta_{j}|j\in\{1,...,n\}\}$. Then by Lemma~\ref{Lemmaaaaa} we have that $2\rho'\phi'+\rho\phi''=0$ and as $(\sigma-\sigma(q_{i}\odot q_{j})^{2})^{\frac{3}{2}}>0$, that
    \begin{align}\label{Identity fourth coordinate}
      0=\sum\limits_{j=1, j\neq 1}^{n}\frac{m_{j}\rho\sinh(\beta_{j}-\beta_{1})}{((q_{1}\odot q_{j})^{2}-1)^{\frac{3}{2}}}\geq 0.
    \end{align}
    Note that $\sinh(\beta_{j}-\beta_{1})=0$ if and only if $\beta_{j}=\beta_{1}$ and that \begin{align*}(\sigma-\sigma(q_{i}\odot q_{j})^{2})^{\frac{3}{2}}>0.\end{align*} Then for (\ref{Identity fourth coordinate}) to hold for $i=1$, all $\beta_{j}$ have to be equal to $\beta_{1}$. This proves that
    \begin{align*}
      \begin{pmatrix}
        q_{i3} \\
        q_{i4}
      \end{pmatrix}=\rho S(\beta_{1}+\phi)\begin{pmatrix}
        0 \\
        1
      \end{pmatrix}.
    \end{align*}
    This completes the proof.
    \section{Proof of Theorem~\ref{Main Theorem 2}}\label{Section proof of main theorem 2}
    Let $q_{1}$,...,$q_{n}$ be a positive elliptic, or negative elliptic rotopulsator for which $r_{i}=r$ for all $i\in\{1,...,n\}$, $q_{i3}$ and $q_{i4}$ independent of $i$ for all $i\in\{1,...,n\}$. It was proven in \cite{T2} that such a rotopulsator, for $r$ not constant, has to form a regular polygon. That means that $\alpha_{j}-\alpha_{i}=\frac{2\pi}{n}(j-i)$ for all $i$, $j\in\{1,...,n\}$. It was proven in \cite{T2}, Criterion~1 that showing existence of these positive elliptic, or negative elliptic rotopulsators is equivalent with, accounting for a change of notation, showing the existence of $\alpha_{i}$ and $r$ solving
    \begin{align}
      &b_{i}=\sum\limits_{j=1,j\neq i}^{n}\frac{m_{j}(1-\cos{(\alpha_{j}-\alpha_{i})})^{-\frac{1}{2}}}{(2-\sigma r^{2}(1-\cos{(\alpha_{j}-\alpha_{i})}))^{\frac{3}{2}}},\label{Such a mass1}\\
      &0=\sum\limits_{j=1,j\neq i}^{n}\frac{m_{j}\sin{(\alpha_{j}-\alpha_{i})}}{(1-\cos{(\alpha_{j}-\alpha_{i})})^{\frac{3}{2}}(2-\sigma r^{2}(1-\cos{(\alpha_{j}-\alpha_{i})}))^{\frac{3}{2}}},\label{Such a mass2}\\
      &b_{1}=...=b_{n}\label{Such a mass3}.
    \end{align}
    So combining (\ref{Such a mass1}), (\ref{Such a mass2}) and (\ref{Such a mass3}) with the fact that $\alpha_{j}-\alpha_{i}=\frac{2\pi}{n}(j-i)$, we get that
    \begin{align*}
      &b_{i}=\sum\limits_{j=1,j\neq i}^{n}\frac{m_{j}(1-\cos{\frac{2\pi}{n}(j-i)})^{-\frac{1}{2}}}{(2-\sigma r^{2}(1-\cos{\frac{2\pi}{n}(j-i)}))^{\frac{3}{2}}}, \\
      &0=\sum\limits_{j=1,j\neq i}^{n}\frac{m_{j}\sin{\frac{2\pi}{n}(j-i)}}{(1-\cos{\frac{2\pi}{n}(j-i)})^{\frac{3}{2}}(2-\sigma r^{2}(1-\cos{\frac{2\pi}{n}(j-i)}))^{\frac{3}{2}}}, \\
      &b_{1}=...=b_{n},
    \end{align*}
    which can be rewritten, defining $m_{j+kn}=m_{j}$ for $k\in\mathbb{Z}$, $j\in\{1,...,n\}$ as
    \begin{align}
      &b_{i}=\sum\limits_{j=1}^{n-1}\frac{m_{j+i}(1-\cos{\frac{2\pi}{n}j})^{-\frac{1}{2}}}{(2-\sigma r^{2}(1-\cos{\frac{2\pi}{n}j}))^{\frac{3}{2}}},\label{mass1}\\
      &0=\sum\limits_{j=1}^{n-1}\frac{m_{j+i}\sin{\frac{2\pi}{n}j}}{(1-\cos{\frac{2\pi}{n}j})^{\frac{3}{2}}(2-\sigma r^{2}(1-\cos{\frac{2\pi}{n}j}))^{\frac{3}{2}}},\label{mass2} \\
      &b_{1}=...=b_{n}.\label{mass3}
    \end{align}
    Let $j_{1}$, $j_{2}\in\{1,...,n-1\}$. Note that two terms
    \begin{align*}
      \frac{m_{j_{1}+i}\sin{\frac{2\pi}{n}j_{1}}}{(1-\cos{\frac{2\pi}{n}j_{1}})^{\frac{3}{2}}(2-\sigma r^{2}(1-\cos{\frac{2\pi}{n}j_{1}}))^{\frac{3}{2}}}
    \end{align*}
    and
    \begin{align*}
      \frac{m_{j_{2}+i}\sin{\frac{2\pi}{n}j_{2}}}{(1-\cos{\frac{2\pi}{n}j_{2}})^{\frac{3}{2}}(2-\sigma r^{2}(1-\cos{\frac{2\pi}{n}j_{2}}))^{\frac{3}{2}}}
    \end{align*}
    are linearly independent if and only if $\cos{\frac{2\pi}{n}j_{1}}\neq\cos{\frac{2\pi}{n}j_{2}}$, because $r$ is not constant. As
    \begin{align*}
      \cos{\frac{2\pi}{n}j_{1}}=\cos{\frac{2\pi}{n}j_{2}}\textrm{ if and only if }\frac{2\pi}{n}j_{1}=\frac{2\pi}{n}j_{2}\textrm{ or }\frac{2\pi}{n}j_{1}=2\pi-\frac{2\pi}{n}j_{2}=\frac{2\pi}{n}(n-j_{2}),
    \end{align*}
    by (\ref{mass2}) this means that for each $j\in\{1,...,n-1\}$ we have that
    \begin{align*}
      0&=\frac{m_{j+i}\sin{\frac{2\pi}{n}j}}{(1-\cos{\frac{2\pi}{n}j})^{\frac{3}{2}}(2-\sigma r^{2}(1-\cos{\frac{2\pi}{n}j}))^{\frac{3}{2}}}\\
      &+\frac{m_{n-j+i}\sin{\frac{2\pi}{n}(n-j)}}{(1-\cos{\frac{2\pi}{n}(n-j)})^{\frac{3}{2}}(2-\sigma r^{2}(1-\cos{\frac{2\pi}{n}(n-j)}))^{\frac{3}{2}}}\\
      &=(m_{j+i}-m_{n-j+i})\frac{\sin{\frac{2\pi}{n}j}}{(1-\cos{\frac{2\pi}{n}j})^{\frac{3}{2}}(2-\sigma r^{2}(1-\cos{\frac{2\pi}{n}j}))^{\frac{3}{2}}}.
    \end{align*}
    So taking $j=1$, this means that $m_{1+i}=m_{-1+i}$. So for $n$ odd, this means that all masses are equal. For $n$ even, additional work is required: Because $m_{1+i}=m_{-1+i}$, we may write that $m_{j}=m$ for the even-labeled masses and $m_{j}=M$ for the odd-labeled masses, giving by (\ref{mass1}) for $i=1$ and $i=2$
    \begin{align*}
      b_{1}&=\sum\limits_{j\textrm{ even }}\frac{M(1-\cos{\frac{2\pi}{n}j})^{-\frac{1}{2}}}{(2-\sigma r^{2}(1-\cos{\frac{2\pi}{n}j}))^{\frac{3}{2}}}   +\sum\limits_{j\textrm{ odd }}\frac{m(1-\cos{\frac{2\pi}{n}j})^{-\frac{1}{2}}}{(2-\sigma r^{2}(1-\cos{\frac{2\pi}{n}j}))^{\frac{3}{2}}}\textrm{ and } \\
      b_{2}&=\sum\limits_{j\textrm{ even }}\frac{m(1-\cos{\frac{2\pi}{n}j})^{-\frac{1}{2}}}{(2-\sigma r^{2}(1-\cos{\frac{2\pi}{n}j}))^{\frac{3}{2}}}
      +\sum\limits_{j\textrm{ odd }}\frac{M(1-\cos{\frac{2\pi}{n}j})^{-\frac{1}{2}}}{(2-\sigma r^{2}(1-\cos{\frac{2\pi}{n}j}))^{\frac{3}{2}}}
    \end{align*}
    respectively, thus giving by (\ref{mass3}) that
    \begin{align}
      0&=b_{1}-b_{2}=(M-m)\left(\sum\limits_{j\textrm{ even }}\frac{(1-\cos{\frac{2\pi}{n}j})^{-\frac{1}{2}}}{(2-\sigma r^{2}(1-\cos{\frac{2\pi}{n}j}))^{\frac{3}{2}}}\right)\nonumber \\
      &-(M-m)\left(\sum\limits_{j\textrm{ odd }}\frac{(1-\cos{\frac{2\pi}{n}j})^{-\frac{1}{2}}}{(2-\sigma r^{2}(1-\cos{\frac{2\pi}{n}j}))^{\frac{3}{2}}}\right).\label{Presto}
    \end{align}
    Note that, again by linear independence, the terms in
    \begin{align*}
      \sum\limits_{j\textrm{ even }}\frac{(1-\cos{\frac{2\pi}{n}j})^{-\frac{1}{2}}}{(2-\sigma r^{2}(1-\cos{\frac{2\pi}{n}j}))^{\frac{3}{2}}}
    \end{align*}
    cannot cancel out against the terms in
    \begin{align*}
      \sum\limits_{j\textrm{ odd }}\frac{(1-\cos{\frac{2\pi}{n}j})^{-\frac{1}{2}}}{(2-\sigma r^{2}(1-\cos{\frac{2\pi}{n}j}))^{\frac{3}{2}}},
    \end{align*}
    meaning that by (\ref{Presto}) we have that $M=m$. This completes the proof.
    \section{Proof of Corollary~\ref{Main Theorem 3}}\label{Section proof of main theorem 3}
    By Theorem~\ref{Main Theorem 2}, all positive elliptic and negative elliptic rotopulsators that are not relative equilibria, for which the $q_{i3}$ and $q_{i4}$, $i\in\{1,...,n\}$, are independent of $i$,  have to have a regular polygon configuration and equal masses. By Theorem~\ref{Main Theorem 1}, negative hyperbolic rotopulsators for which $\rho_{i}$, $i\in\{1,...,n\}$, is independent of $i$ have to be negative elliptic to have a point configuration that maintains its shape. What remains to show is that for $r$ constant there exists at most one relative equilibrium for each set of masses if $\sigma=-1$, that  the same is true for $\sigma=1$ if $r<\frac{2}{5}\sqrt{5}$ and finally that these rotopulsators exist. We will begin with the latter:
    If a positive elliptic rotopulsator, or a negative elliptic rotopulsator $q_{1},...,q_{n}$ for which $r_{i}=r$ is independent of $i$ for $i\in\{1,...,n\}$, as are $q_{i3}=z_{1}$ and $q_{i4}=z_{2}$, then to prove existence of such a rotopulsator, we need to prove that it solves (\ref{Such a mass1}), (\ref{Such a mass2}) and (\ref{Such a mass3}). Because we already know that these rotopulsators have a configuration of a regular polygon and have equal masses, it suffices to show that (\ref{mass1}), (\ref{mass2}) and (\ref{mass3}) are fulfilled. If the $m_{j+i}$ in (\ref{mass1}) are all equal, then the $b_{i}$ in (\ref{mass1}) and therefore in (\ref{mass3}) are all equal, which leaves establishing (\ref{mass2}): If all masses are equal, then writing $k=n-j$ and $m_{j+i}=m$, we get that
    \begin{align*}
      &\sum\limits_{j=1}^{n-1}\frac{m\sin{\frac{2\pi}{n}j}}{(1-\cos{\frac{2\pi}{n}j})^{\frac{3}{2}}(2-\sigma r^{2}(1-\cos{\frac{2\pi}{n}j}))^{\frac{3}{2}}}\\
      &=\sum\limits_{k=1}^{n-1}\frac{m\sin{\frac{2\pi}{n}(n-k)}}{(1-\cos{\frac{2\pi}{n}(n-k)})^{\frac{3}{2}}(2-\sigma r^{2}(1-\cos{\frac{2\pi}{n}(n-k)}))^{\frac{3}{2}}} \\
      &=-\sum\limits_{k=1}^{n-1}\frac{m\sin{\frac{2\pi}{n}k}}{(1-\cos{\frac{2\pi}{n}k})^{\frac{3}{2}}(2-\sigma r^{2}(1-\cos{\frac{2\pi}{n}k}))^{\frac{3}{2}}},
    \end{align*}
    giving that indeed
    \begin{align*}
      \sum\limits_{j=1}^{n-1}\frac{m_{j+i}\sin{\frac{2\pi}{n}j}}{(1-\cos{\frac{2\pi}{n}j})^{\frac{3}{2}}(2-\sigma r^{2}(1-\cos{\frac{2\pi}{n}j}))^{\frac{3}{2}}}=0.
    \end{align*}
    This proves that these rotopulsators indeed exist.
    To prove that for fixed masses there exists at most one negative elliptic relative equilibrium and for $\sigma=1$ if additionally $r<\frac{2}{5}\sqrt{5}$ there exists at most one positive elliptic relative equilibrium, we will use the following result from \cite{T5}: Let $0\leq\alpha_{1}<...<\alpha_{n}<2\pi$, $m_{1}>0$,...,$m_{n}>0$, $r>0$ and $A>0$ be constants. Then there exists at most one relative equilibrium $q_{1},...,q_{n}\in\mathbb{R}^{2}$, \begin{align*}q_{i}(t)=rT(\alpha_{i}+At)\begin{pmatrix}
      1\\ 0
    \end{pmatrix}\end{align*}  of
    \begin{align}\label{Alternative equations of motion}
      \ddot{q}_{i}=\sum\limits_{j=1,j\neq i}m_{j}(q_{j}-q_{i})f(\|q_{j}-q_{i}\|),\textrm{ }i\in\{1,...,n\},
    \end{align}
    where
    $f$ is a positive, differentiable function for which $\frac{d}{dx}(xf(x))<0$ for each fixed set of masses $m_{1}$,...,$m_{n}$. \newline Inserting $q_{i}(t)=rT(\alpha_{i}+At)\begin{pmatrix}
      1\\ 0
    \end{pmatrix}$ into (\ref{Alternative equations of motion}) and multiplying the resulting equations from the left by $T(\alpha_{i}+At)^{-1}$ shows that this is equivalent with showing that, for $m_{1}$,...,$m_{n}$ fixed, the system
    \begin{align*}
      A^{2}\begin{pmatrix}
        1 \\ 0
      \end{pmatrix}=\sum\limits_{j=1,j\neq i}^{n}m_{j}\begin{pmatrix}
        1-\cos{(\alpha_{j}-\alpha_{i})} \\
        \sin{(\alpha_{j}-\alpha_{i})}
      \end{pmatrix}f(\sqrt{2}r\sqrt{1-\cos{(\alpha_{j}-\alpha_{i})}}),
    \end{align*}
    has at most one solution $\alpha_{1}$,...,$\alpha_{n}$, $r$, barring of course the possibility of constant rotations of relative equilibria.
    If $q_{1}$,...,$q_{n}$ is a positive elliptic, or negative elliptic relative equilibrium of (\ref{EquationsOfMotion Curved}) for which $q_{i3}=z_{1}$ and $q_{i4}=z_{2}$ is independent of $i$ for all $i\in\{1,...,n\}$, then $r$ is constant and we may assume that $\theta(t)=At$ for a certain positive constant $A>0$, meaning that by (2.24) in the proof of Criterion~1 of \cite{T2} we get, adjusting for notation and if $1-\sigma r^{2}\neq 0$, that
    \begin{align*}
      r^{3}\begin{pmatrix}
        A^{2} \\
        0
      \end{pmatrix}=\sum\limits_{j=1,j\neq i}^{n}m_{j}\begin{pmatrix}
        1-\cos{(\alpha_{j}-\alpha_{i})} \\
        \sin{(\alpha_{j}-\alpha_{i})}
      \end{pmatrix}f(\sqrt{2}r\sqrt{1-\cos{(\alpha_{j}-\alpha_{i})}}).
    \end{align*}
    with $f(x)=(x^{2}-\frac{1}{4}\sigma x^{4})^{-\frac{3}{2}}$. For $\sigma=-1$, this means that $\frac{d}{dx}(xf(x))<0$. For $\sigma=1$ we get that $\frac{d}{dx}(xf(x))<0$ if and only if $x^{2}<\frac{5}{8}$. As $xf(x)$ need only be decreasing for any values $x=\sqrt{2}r\sqrt{1-\cos{(\alpha_{j}-\alpha_{i})}}$, we may assume that $x<2r$, meaning that $4r^{2}<\frac{5}{8}$, or $r<\frac{2}{5}\sqrt{5}$. This completes the proof.

\end{document}